\theoremstyle{plain}
   \newtheorem{theorem}{Theorem}[section]
   \newtheorem{lemma}[theorem]{Lemma}
   \newtheorem{corollary}[theorem]{Corollary}
   \newtheorem{problem}[theorem]{Problem}
\theoremstyle{definition}
\newtheorem{example}{Example}[section] 
\theoremstyle{remark}
 \newtheorem{remark}{Remark}[section]
\newcommand{\ZZ}{\mathbb{Z}}
\newcommand{\R}{\mathbb{R}}
\newcommand{\s}{\mathcal{S}}
\newcommand{\m}{{\it m}}
\def\newop#1{\expandafter\def\csname #1\endcsname{\mathop{\rm
#1}\nolimits}}
\title{Counting Markov Equivalence Classes by Number of Immoralities}
\author{} 
\author{ {\bf Adityanarayanan Radhakrishnan} \\Laboratory for Information and \\Decision
Systems, and Institute for \\Data, Systems and Society\\Massachusetts Institute of Technology\\
Cambridge, MA, USA\\
\And
{\bf Liam Solus}  \\
Department of Mathematics\\
KTH Royal Institute of Technology\\
	Stockholm, Sweden \\
\And
{\bf Caroline Uhler}   \\Laboratory for Information and \\Decision
Systems, and Institute for \\Data, Systems and Society\\
Massachusetts Institute of Technology\\
Cambridge, MA, USA\\
}
\begin{document}

\maketitle

\begin{abstract}
Two directed acyclic graphs (DAGs) are called Markov equivalent if and only if they have the same underlying undirected graph (i.e.~skeleton) and the same set of immoralities.
Using observational data, a DAG model can only be determined up to Markov equivalence, and so it is desirable to understand the size and number of Markov equivalence classes (MECs) combinatorially.  
In this paper, we address this enumerative question using a pair of generating functions that encode the number and size of MECs on a skeleton $G$, and in doing so we connect this problem to classical problems in combinatorial optimization.  
The first is a graph polynomial that counts the number of MECs on $G$ by their number of immoralities.      
Using connections to the independent set problem, we show that computing a DAG on $G$ with the maximum possible number of immoralities is NP-hard.  
The second generating function counts the MECs on $G$ according to their size.  
Via computer enumeration, we show that this generating function is distinct for every connected graph on $p$ nodes for all $p\leq 10$.  
\end{abstract}

\section{INTRODUCTION}
\label{sec: introduction}
Graphical models based on directed acyclic graphs (DAGs) are widely used to represent complex causal systems in applications ranging from computational biology to epidemiology, and sociology (Friedman et al., 2000; Pearl, 2000; Robins et al., 2000; Spirtes et al., 2001).  
A DAG entails a set of conditional independence (CI) relations through the Markov properties.  
Two DAGs are said to be \emph{Markov equivalent} if they entail the same CI relations.  
Verma and Pearl (1992) show that a \emph{Markov equivalence class} (MEC) is determined by the underlying undirected graph (or \emph{skeleton}) and the placement of immoralities, i.e.~induced subgraphs of the form $X\to Z\leftarrow Y$.
From observational data alone, a DAG can only be identified up to Markov equivalence, and it is therefore important to describe the set of MECs and their sizes.  
For instance, if the MECs are large in size, then causal inference algorithms that operate in the space of MECs as compared to DAGs could significantly increase efficiency.  
This paper focuses on the complexity of deciding the number and size of MECs on a fixed skeleton.    

The literature on the MEC enumeration problem is surprisingly sparse, and it can be summarized in terms of two important perspectives: (1) enumerate all MECs on $p$ nodes (as in (Gillespie and Perlman, 2001)), and (2) enumerate all MECs of a given size (as in (Gillespie, 2006; Steinsky, 2003; Wagner, 2013; He and Yu, 2016)).  
The characterization of Markov equivalence of Verma and Pearl  (1992) results in a representation of a MEC by a graph with directed and undirected edges known as the \emph{essential graph} (Andersson et al., 1997) (or \emph{cPDAG} (Chickering, 2002) or \emph{maximally oriented graph} (Meek, 1995)).
Gillespie and Perlman (Gillespie et al., 2001) use this characterization to identify all MECs on $p\leq 10$ nodes;  
namely, they fix a skeleton on $p$ nodes, and then count the number of ways to compatibly place immoralities within the skeleton.  
The study of He and Yu (2016) counts the size of a \emph{single} MEC in terms of the \emph{core graph} of its essential graph.  
The works (Gillespie, 2006; Steinsky, 2003; Wagner 2013) give inclusion-exclusion formulae for MECs of a fixed size by utilizing the structure of the essential graph.  
However, since essential graphs can be complicated, these formulae are only realizable for constrained classes of MECs.
Namely, (Steinsky, 2003) and (Wagner, 2013) only consider MECs of size one, and (Gillespie, 2006) must fix the undirected edges of the essential graphs to be enumerated.  
These results show that the enumeration of MECs by number of nodes and size is a difficult problem in general.  

A common approach to difficult graphical structure enumeration problems is to specify a type of graph for which to solve the problem.  
The purpose of this paper is to study the MEC enumeration problem from this perspective.  
This approach is used in classic combinatorial optimization problems such as the enumeration of independent sets, matchings, and colorings (Ellis-Monaghan and Merino, 2011; Levit and Mandrescu, 2005).  
In each case, it is typical to consider a generating function $P(G;x) := \sum_{k\geq0}\alpha_k(G)x^k$, that when evaluated at $x=1$ returns the desired statistic for the graph $G$.  
Such a generating function is called a \emph{graph polynomial}.  
For example, the \emph{independence polynomial} of $G$ is the graph polynomial in which $\alpha_k(G)$ is the number of independent sets of size $k$ in $G$.  
Researchers can gain useful information about the original enumeration problem by studying the properties and coefficients of a graph polynomial.  
For instance, the independence polynomial of $G$ encodes the total number of independent sets (or the \emph{Fibonacci number} of $G$) (Prodinger and Tichy, 1982), the maximum size of an independent set (or \emph{independence number} of $G$) (van Lint and Wilson, 2001), and the number of independent sets of a fixed size (Levit and Mandrescu, 2005); all of which have been studied extensively.   
These refined statistics work together to give a complete understanding of the problem of enumerating independent sets for $G$.  
Here, we are interested in the combinatorial statistics of a graph $G$:
\begin{enumerate}[(1)]
	\item $M(G)$, the total number of MECs on $G$, 
	\item $\m(G)$, the maximum number of immoralities on $G$, 
	\item $\m_k(G)$, the number of ways to place exactly $k$ immoralities on $G$, and 
	\item $M(G)_{\mbox{freq}} := (s_1(G),s_2(G),\ldots)$, where $s_k(G)$ denotes the number of MECs on $G$ of size $k$.  
\end{enumerate}
The first three statistics fit together naturally in the graph polynomial presentation
$$
M(G;x) := \sum_{k=0}^{\m(G)}\m_k(G)x^k,
$$
since then 
$ 
M(G;1) = M(G).
$
Similarly, we can express the vector $M(G)_{\mbox{freq}}$ as a graph polynomial of the form $\sum_{k\geq0}s_k(G)x^k$.  
However, it is perhaps more natural to encode the entries of $M(G)_{\mbox{freq}}$ in the arithmetic function
$$
S(G;x) := \sum_{k\geq0}\frac{s_k(G)}{k^x}.  
$$
We then have that $S(G;0)=M(G)=M(G;1)$, and both generating functions are multiplicative with respect to disjoint unions of graphs.  
That is,  
\begin{equation*}
\begin{split}
M(G\sqcup H; X)&=M(G;x)M(H;x), \mbox{ and}	\\
S(G\sqcup H; X)&=S(G;x)S(H;x),
\end{split}
\end{equation*}
where $G\sqcup H$ denotes the disjoint union of two graphs $G$ and $H$.  

We are interested in the complexity of computing the combinatorial statistics associated to the generating functions $M(G;x)$ and $S(G;x)$.  
The resulting theorems inform the complexity of computing the number and size of MECs on a fixed skeleton $G$.  
Our first main theorem, proven in Section~\ref{subsec: proof of theorem np-complete}, observes the difficulty of computing a MEC on a skeleton $G$ with many immoralities.  
\begin{theorem}
\label{thm: np-complete}
Given an undirected graph $G$, the problem of computing a DAG $\mathcal{G}$ with skeleton $G$ and $m(G)$ immoralities is NP-hard.  
\end{theorem}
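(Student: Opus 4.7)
The plan is to reduce the decision version of MAXIMUM INDEPENDENT SET---given a graph $H$ and integer $k$, is $\alpha(H) \geq k$?---to our problem. Membership in NP is immediate: a DAG with skeleton $G$ is a polynomial-size witness whose immorality count can be verified in polynomial time, so it suffices to exhibit a polynomial-time reduction from MAX INDEPENDENT SET to the decision version of computing $m(G)$.

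The combinatorial heart of the reduction is the following observation. In any DAG $\mathcal{G}$ with skeleton $G$, the number of immoralities centered at a vertex $v$ equals the number of non-adjacent pairs among its in-neighbors. As a function of $I_v \subseteq N_G(v)$, this is maximized at $I_v = N_G(v)$, yielding $\binom{d_G(v)}{2} - e(G[N_G(v)])$, so the per-vertex maximum is realized exactly when $v$ is a \emph{sink} of $\mathcal{G}$. However, since a single edge cannot be oriented inward at both of its endpoints, the set $S$ of sinks of any DAG orientation of $G$ must be an independent set of $G$; conversely, any independent set in $G$ is realizable as the sink set of a DAG orientation by placing its vertices last in a topological order.

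Given an instance $(H, k)$ of MAX INDEPENDENT SET, my construction of $G$ preserves $V(H)$ and $E(H)$ and attaches to each $v \in V(H)$ a gadget whose maximum immortality contribution at $v$ is achieved only when $v$ is a sink of $G$ with respect to \emph{all} of its incident edges, including the $H$-edges. With the gadget size chosen polynomially in $|V(H)|$, the dominant term of $m(G)$ becomes a strictly increasing function of $|S \cap V(H)|$. Since the preserved $H$-edges force $S \cap V(H)$ to be an independent set of $H$, we obtain $m(G) = f(\alpha(H)) + C$ for an explicit increasing $f$ and a polynomial-time computable constant $C$. Choosing the threshold $N = N(H, k)$ appropriately, $m(G) \geq N$ iff $\alpha(H) \geq k$, which completes the reduction.

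The main obstacle is designing the gadget so that its contribution is genuinely gated on $v$ being a sink with respect to its $H$-neighbors, not merely its gadget vertices. A naive pendant-star gadget fails: leaves can always be oriented into $v$ regardless of how the $H$-edges incident to $v$ are oriented, so every vertex reaps the full gadget benefit independently of $S$, and the analysis collapses to one about $m(H)$ rather than $\alpha(H)$. Overcoming this requires coupling the gadget to the $H$-neighbors of $v$---for instance, by making each gadget "leaf" adjacent to both $v$ and the $H$-neighbors of $v$---so that the non-adjacency structure among $v$'s in-neighbors collapses unless $v$'s $H$-edges are also oriented inward. Verifying that the coupled gadget achieves this gating, and handling the resulting counting rigorously, is the technical heart of the proof.
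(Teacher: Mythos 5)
There is a genuine gap: what you have written is a reduction \emph{plan} from MAX INDEPENDENT SET, and the step you yourself identify as ``the technical heart'' --- constructing a gadget whose contribution is gated on $v$ being a sink with respect to its $H$-edges, and carrying out the counting --- is neither done nor verifiable from the sketch. Worse, the specific coupling you propose does not achieve the gating. If the gadget leaves $u_1,\dots,u_t$ attached to $v$ are pairwise non-adjacent, then orienting them all into $v$ already produces $\binom{t}{2}$ immoralities centered at $v$ \emph{regardless} of how the $H$-edges at $v$ are oriented, since the leaf--leaf pairs remain non-adjacent in-neighbors; making each leaf additionally adjacent to $N_H(v)$ only kills the leaf--$H$-neighbor pairs, not the leaf--leaf pairs, so every vertex still reaps the full gadget bonus and the analysis collapses exactly as in the pendant-star case you reject. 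If instead you make the leaves mutually adjacent, the gadget contributes nothing at $v$ at all. A further unaddressed issue is that the total immorality count is not a function of the sink set alone: every vertex of positive in-degree contributes $\binom{d^-(v)}{2}$ minus the edges among its in-neighbors, so ``dominant term'' claims need an explicit gadget and explicit bounds on the non-gadget terms, which are absent.

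For comparison, the paper avoids gadgets entirely by reducing MINIMUM VERTEX COVER on \emph{triangle-free} graphs (NP-complete by Poljak). For triangle-free $G$ the immorality count of any DAG is exactly $\sum_v \binom{d^-(v)}{2}$ with $\sum_v d^-(v)=|E|$ fixed, so by convexity the maximum is attained by concentrating in-degrees on as few vertices as possible; the paper formalizes this via star decompositions, shows the centers of a minimum star decomposition form a minimum vertex cover, and shows such a decomposition is realizable by a DAG. Your sink/independent-set observation is the complementary viewpoint (sources versus centers), but without a working gadget it does not yield a reduction. If you want to salvage your route, the cleanest fix is to drop the gadget and prove the convexity/vertex-cover argument directly on triangle-free instances.
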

In analogy to the independence number of $G$, we call $\m(G)$ the \emph{immorality number of $G$}.  
This number is natural to consider when one attempts to enumerate the MECs on $G$ by counting all compatible placements of immoralities.  
Here, we use the notion of NP-hardness as defined in (Chapter 5, Garey and Johnson, 1979).  
We will prove Theorem~\ref{thm: np-complete} in Section~\ref{sec: immorality numbers and star decompositions} via a reduction of the minimum vertex cover problem.  
To do so, we first prove a correspondence between minimum vertex covers of a given triangle-free graph $G$ and minimum decompositions of $G$ into non-overlapping stars, which we call \emph{minimum star decompositions}.  
As with most NP-hard problems, restricting to special cases can make the problem tractable.  
In this case, the connection with minimum star decompositions allows us to compute $\m(G)$ in some special cases.  
In particular, we can compute $\m(G)$ for triangle-free graphs whose minimum star decompositions are isomorphic as forests, and we apply this result to recover $\m(G)$ for the complete bipartite graph $K_{p,p}$ and some special types of circulant graphs.  

Our second complexity result is computer-aided and observational.  
In order to study the generating functions $M(G;x)$ and $S(G;x)$, we developed a computer program for the enumeration of the combinatorial statistics (1), (2), (3), and (4) that expands on the original program of Gillespie and Perlman (2001).  
Using this program, we prove the following intriguing theorem.  
\begin{theorem}
\label{thm: frequency determinism}
The arithmetic generating function $S(G;x)$ is distinct for every connected graph $G$ on $p$ nodes for all $p\leq 10$.   
\end{theorem}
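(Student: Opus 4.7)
The plan is a brute-force computational verification: enumerate every connected graph on $p$ nodes up to isomorphism for each $p\leq 10$, compute the vector $M(G)_{\mbox{freq}} = (s_1(G),s_2(G),\ldots)$, and confirm that no two non-isomorphic connected graphs on the same number of nodes produce the same vector.  Since $S(G;x) = \sum_{k\geq 0} s_k(G)/k^x$ is determined by $M(G)_{\mbox{freq}}$ and only finitely many $s_k(G)$ are nonzero, distinctness of the sequences is equivalent to distinctness of the arithmetic functions.  The result is inherently a finite search, so there is no structural theorem to prove; the proof is the correctness and exhaustiveness of the computation.

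Three subroutines are needed.  First, a generator of all connected graphs on $p\leq 10$ nodes up to isomorphism (e.g., using \textsf{nauty}), pipelined with the downstream computation so that the roughly $11.7$ million connected graphs at $p=10$ need not be stored simultaneously.  Second, for each skeleton $G$, an enumerator of MECs on $G$ in the spirit of Gillespie and Perlman (2001): iterate over subsets of admissible immoralities (triples $X - Z - Y$ with $XY \notin E(G)$), discard any subset whose induced orientation, after closure under Meek's rules, introduces a directed cycle or an extra immorality, and keep one representative per resulting essential graph.  Third, a size routine that applies the chain-component decomposition of He and Yu (2016): the size of a MEC factors as a product over the undirected connected components of its essential graph of the number of acyclic orientations of each component that introduce no new immorality, which is tabulated by a small recursion on the component.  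Once the vectors $(s_1(G),s_2(G),\ldots)$ are assembled, distinctness is checked by sorting or hashing within each bucket indexed by $p$.  As a consistency check, $\sum_k s_k(G) = M(G;1)$ should match the total MEC counts for $G$, and summing these over all connected $G$ on $p$ nodes should recover the figures reported in Gillespie and Perlman (2001).

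The main obstacle is computational scale rather than mathematical depth.  At $p=10$, both the number of connected skeletons and, for dense skeletons, the number of admissible immorality placements and the sizes of chain components are large, so the MEC enumerator must aggressively prune orientation-inconsistent partial assignments and exploit the automorphism group of $G$ to avoid redundant work.  A secondary but equally important concern is correctness of the implementation: beyond the aggregate cross-check against (Gillespie and Perlman, 2001), one should independently verify the output on families where $M(G)_{\mbox{freq}}$ can be computed by hand or by a second method (paths, cycles, complete graphs, and $K_{p,p}$ for small $p$, for which Theorem~\ref{thm: np-complete} and the accompanying star-decomposition analysis already supply partial information).  If the verification completes without a collision within any bucket, the theorem follows.
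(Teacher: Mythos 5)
Your proposal is correct in the only sense available for a statement of this kind --- it is a finite, exhaustive computation whose ``proof'' is the correctness and completeness of the program --- and at the top level it coincides with the paper: generate all connected skeleta on at most ten nodes with {\tt nauty}, compute $M(G)_{\mbox{freq}}$ for each, and check for collisions. Where you genuinely diverge is in the subroutine producing $M(G)_{\mbox{freq}}$. The paper never enumerates immorality placements: it generates every acyclic orientation of the skeleton directly (via the Barbosa--Szwarcfiter algorithm), hashes each DAG by its set of immoralities using a Cantor-pairing bit mask, and reads off each MEC's size as the number of DAGs landing in the same hash bucket, with Verma--Pearl guaranteeing that the buckets are exactly the MECs. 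Your route --- enumerate candidate immorality sets, filter by Meek-rule closure and consistent extendability, and compute sizes by the He--Yu chain-component product --- is closer to Gillespie and Perlman's original method; it avoids touching every DAG, but it places far more machinery (Meek closure, extension checking, the chordal-component size recursion) on the trusted computing base, and naive subset enumeration over the up-to-$100$ induced $3$-paths of a dense $10$-node skeleton is hopeless without the aggressive pruning you flag, whereas the paper's DAG-level enumeration is bounded by $10!\approx 3.6\times 10^6$ orientations per skeleton and needs no orientation logic beyond detecting immoralities. Two small cautions: the paper checks for coincidences among all skeleta on at most ten nodes, so you should also compare frequency vectors across different values of $p$ rather than only within each fixed-$p$ bucket; and your proposed cross-validation (aggregate counts against Gillespie--Perlman, plus hand-checkable families) is essentially the same correctness audit the paper carries out.
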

Theorem~\ref{thm: frequency determinism} can be viewed as a complexity result in the sense that it tells us that recovering the complete set of statistics $M(G)_{\mbox{freq}}$ for some unknown connected graph $G$ from observational data alone is equally hard as recovering $G$ itself.  
In Section~\ref{sec: computational analysis}, we describe our computer program for the computation of the statistics (1), (2) (3), and (4), and verify Theorem~\ref{thm: frequency determinism}.

\section{IMMORALITY NUMBERS AND STAR DECOMPOSITIONS}
\label{sec: immorality numbers and star decompositions}
In this section, we show that computing the immorality number $\m(G)$ of a graph $G$ is an NP-hard problem by showing that it is a reduction of the problem of computing minimum vertex covers of $G$.  
Recall that a \emph{vertex cover} of $G$ is a subset $S$ of vertices of $G$ for which each edge of $G$ is adjacent to some vertex in $S$.  
A classic problem in combinatorial optimization is to identify a vertex cover of minimum size for a given graph $G$.  
Formally stated, this is the search problem
\begin{problem}
\label{prob: minimum vertex cover}
{\bf MINIMUM VERTEX COVER}

\noindent\underline{INPUT}: An undirected graph $G= (V,E)$.  

\noindent\underline{OUTPUT}: A subset $C\subset V$ such that for all edges $\{u,v\}\in E$ either $u\in C$ or $v\in C$ and $|C|$ is minimized with respect to this property.
\end{problem}
The decision version of this problem is called VERTEX COVER (Karp, 1972) and is stated as follows.
\begin{problem}
\label{prob: vertex cover}
{\bf VERTEX COVER}

\noindent\underline{INPUT}: An undirected graph $G= (V,E)$ and a nonnegative integer $k$.  

\noindent\underline{PROPERTY}: $G$ has a vertex cover of size less than or equal to $k$.  
\end{problem}

A search problem $\Pi$ is said to be \emph{NP-hard} if there is a polynomial time Turing reduction from an NP-complete $\Pi^\prime$ problem to $\Pi$ (Chapter 5, Garey and Johnson, 1979). 
That is, if we are given a polynomial time algorithm $A$ for solving $\Pi$, then there exists a polynomial time algorithm for solving $\Pi^\prime$ using $A$ as a hypothetical subroutine.
In (Poljak, 1974), it is shown that VERTEX COVER is NP-complete even when restricted to triangle-free graphs.  
Moreover, given a polynomial time algorithm for solving MINIMUM VERTEX COVER, we can certainly derive a polynomial time algorithm to solve VERTEX COVER (for triangle-free graphs or otherwise).  
Thus, MINIMUM VERTEX COVER is NP-hard for both triangle-free and arbitrary graphs.  
Analogously, we consider the following search and decision problems related to the computation of the immorality number $m(G)$.  
\begin{problem}
\label{prob: maximum immoralities}
{\bf MAXIMUM IMMORALITIES}

\noindent\underline{INPUT}: An undirected graph $G= (V,E)$.  

\noindent\underline{OUTPUT}: A DAG $\mathcal{G}$ with skeleton $G$ and $m(G)$ immoralities.
\end{problem}
\begin{problem}
\label{prob: immoralities}
{\bf IMMORALITES}

\noindent\underline{INPUT}: An undirected graph $G= (V,E)$ and a nonnegative integer $k$.  

\noindent\underline{PROPERTY}: There is a DAG $\mathcal{G}$ with skeleton $G$ having at least $k$ immoralities. 
\end{problem}

In the following, we will identify a polynomial time Turing reduction of MINIMUM VERTEX COVER  to MAXIMUM IMMORALITIES when restricted to triangle-free graphs. 
A polynomial time solution to MAXIMUM IMMORALITIES would trivially yield a polynomial time solution to the same problem in the triangle-free case.  
Since this would in turn solve an NP-complete problem, we can conclude that the general instance of MAXIMUM IMMORALITIES is NP-hard.  
This will prove Theorem~\ref{thm: np-complete}.  

In order to reduce MINIMUM VERTEX COVER to MAXIMUM IMMORALITIES for triangle-free graphs, we will utilize a notion of \emph{star decompositions} of $G$.  
We then use this connection to compute $\m(G)$ in the special cases of the complete bipartite graph $K_{p,p}$ and a family of circulant graphs.   

\subsection{STAR DECOMPOSITIONS}
\label{subsec: star decompositions}
Let $G = (V,E)$ be a connected, undirected graph, and let $K_{p,q}$ denote the complete bipartite graph with nodes partitioned into a collection of size $p$ and a collection of size $q$.  
Recall that a \emph{$p$-star} is the complete bipartite graph $K_{1,p}$ and its \emph{center} is the unique degree $p$ node.  
A collection of stars $\{S_1,\ldots,S_k\}$ is called a \emph{star decomposition of $G$} if each $S_i$ is a subgraph of $G$ and each edge of $G$ is an edge of exactly one star in the collection.  
Our definition of star decomposition is a bit more general than the standard notion studied in graph decompositions.  
The classic notion of a star decomposition adds the requirement that the stars $S_1,\ldots,S_k$ are all isomorphic to one another.  
While the literature on which graphs admit a star decomposition of this type is quite extensive (Cain, 1974; Cohen and Tarsi, 1991; Ushio et al., 1978), there is substantially less work relating to the more general notion we use here (Lin and Shyu, 1996).  

In the following, the \emph{trivial star} refers to $K_{1,0}$, and the \emph{size} of a star $S$ is the size of its edge set, which we denote by $|S|$.  
The \emph{size} of a star decomposition $\s$ is the number of stars in the decomposition, and it is denoted $|\s|$.  
Given a star decomposition $\s=\{S_1,\ldots,S_k\}$ let $v(\s)\in\R^k$ denote the vector of the sizes of stars in $\s$ ordered greatest-to-least from left-to-right.  
So if $|S_1|\geq\cdots\geq|S_k|$ then $v(\s) = (|S_1|,|S_2|,\ldots,|S_k|)$.  
If $\s$ is a star decomposition of size $k$ with cardinality vector $v(\s)\in\R^k$, for $m\geq k$ we embed $v(\s)\in\R^m$ by appending zeros to the right end of $v(\s)\in\R^k$.  
Notice that this corresponds to appending trivial stars to $\s$.  
We call a star decomposition of $G$ \emph{reduced} if it contains no trivial stars.
Notice that the largest reduced star decomposition contains at most $|E|$ stars.  
A \emph{minimum} star decomposition of $G$ contains the minimum number of stars over all star decompositions of $G$.  
Notice that a minimum star decomposition will always be reduced.
Since the maximum number of stars in a reduced star decomposition of $G$ is $|E|$, then any minimum star decomposition contains at most $|E|$ stars.  
Also, given a star decomposition $\s$, we call the set of all centers of stars in $\s$ the \emph{center set of $\s$}, and we denote it by $C(\s)$.  
Note that if a star consists only of a single edge, then we simply choose one of its endpoints to be the center node.

For any DAG $\mathcal{G}$ on the undirected graph $G$ we can construct a star decomposition of $G$ as follows.  
For each node $v\in V$, consider the substar $S_v$ in $G$ whose center is $v$ and whose edges are those directed into $v$ in the DAG $\mathcal{G}$.  
The \emph{star decomposition of $G$ induced by $\mathcal{G}$} is then
$$
\s(\mathcal{G}) := \{S_v : v\in V\}.
$$
Notice that an induced star decomposition will not be reduced, and may contain intervals $K_{1,1}$.  
\begin{remark}
\label{remark: not all star decompositions are induced}
Not all star decompositions of a graph $G$ are induced by some DAG on $G$.   
For example, any graph has a star decomposition consisting of precisely its set of edges.  
In the case of the $4$-cycle, for instance, this decomposition cannot arise from a DAG.  
\end{remark}
Since a star decomposition induced by a DAG always contains at least one trivial star, we make the following important definition.  
A minimum star decomposition of $G$ is \emph{induced} by a DAG $\mathcal{G}$ on $G$ if it is a reduction of the star decomposition induced by $\mathcal{G}$.

\begin{example}
\label{ex: minimum star decompositions}
Consider $C_4$, the cycle on $4$ nodes.  
Up to isomorphism, $C_4$ admits the three reduced star decompositions depicted in Figure~\ref{fig: 4-cycle star decompositions}.  
From this we can see that the minimum star decompositions of $C_4$ are all isomorphic to $\{K_{1,2},K_{1,2}\}$.  
The two right-most star decompositions in Figure~\ref{fig: 4-cycle star decompositions} are each induced by DAGs.  
For example, the middle decomposition is induced by the DAG $\mathcal{G}_1$ and the right-most decomposition is induced by the DAG $\mathcal{G}_2$ as depicted in Figure~\ref{fig: 4-cycle induced star decompositions}. 
The left-most star decomposition in Figure~\ref{fig: 4-cycle star decompositions} is the maximum cardinality reduced star decomposition of the $4$-cycle, which consists of exactly one copy of $K_{1,1}$ for each edge of $C_4$.  
\end{example}
	\begin{figure}
	\centering
	\includegraphics[width=0.45\textwidth]{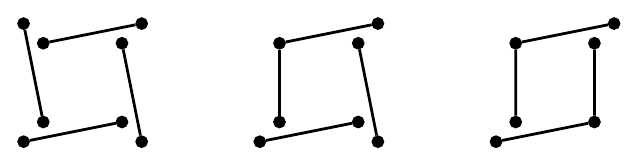}
	\caption{The three nonisomorphic reduced star decompositions of the $4$-cycle}
	\label{fig: 4-cycle star decompositions}
	\end{figure}
	\begin{figure}[b!]
	\centering
	\includegraphics[width=0.45\textwidth]{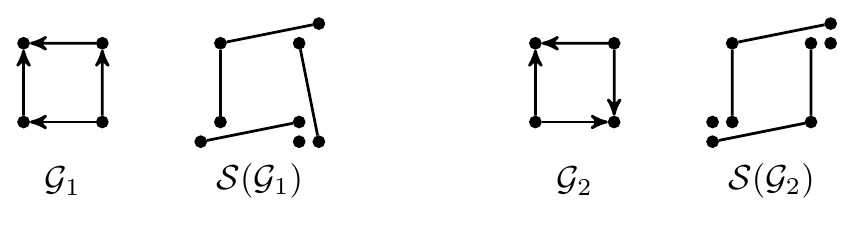}
	\caption{The DAGs $\mathcal{G}_1$ and $\mathcal{G}_2$ and their induced (nonreduced) star decompositions.}
	\label{fig: 4-cycle induced star decompositions}
	\end{figure}

Example~\ref{ex: minimum star decompositions} demonstrates the properties of minimum star decompositions that we will use to study the immorality number of triangle-free graphs.  
Notice first that the center set of each star decomposition is a vertex cover of $C_4$ and that the minimum vertex covers of $G$ are center sets of minimum star decompositions.  
Indeed, there exists a many-to-one correspondence between minimum star decompositions and minimum vertex covers of $G$.

\begin{lemma}
\label{lem: center set is a minimum vertex cover}
Suppose $\s$ is a minimum star decomposition with center set $C(\s)$.  
Then $C(\s)$ is a minimum vertex cover of $G$.  
\end{lemma}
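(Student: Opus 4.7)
The plan is to prove the lemma in two movements: first, that $C(\mathcal{S})$ is always a vertex cover of $G$, and second, that it is of minimum cardinality among all vertex covers.

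The first movement is essentially immediate from the definition of a star. Every edge $e$ of $G$ lies in exactly one star $S_i$ of $\mathcal{S}$, and since $S_i \cong K_{1, |S_i|}$ is a star, the edge $e$ must be incident to the center of $S_i$. Hence the center of $S_i$ covers $e$, so $C(\mathcal{S})$ covers every edge of $G$.

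For the second movement, I would first argue that $|C(\mathcal{S})| = |\mathcal{S}|$. Suppose two distinct stars $S_i, S_j \in \mathcal{S}$ shared a center vertex $v$. Since $\mathcal{S}$ is an edge-disjoint decomposition, the union of their edge sets is again a star centered at $v$, and replacing $S_i, S_j$ by this union produces a star decomposition of strictly smaller size, contradicting the minimality of $\mathcal{S}$. Thus distinct stars have distinct centers, so $|C(\mathcal{S})| = |\mathcal{S}|$. Now let $C$ be any vertex cover of $G$. For each edge $e$ of $G$, at least one endpoint lies in $C$; pick one such endpoint and call it $c(e)$. For each $v \in C$, let $S_v$ be the substar of $G$ centered at $v$ whose edge set is $\{e \in E : c(e) = v\}$, and then discard the $S_v$ that have no edges (the trivial stars). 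The resulting collection is a reduced star decomposition $\mathcal{S}_C$ of $G$ with $|\mathcal{S}_C| \leq |C|$. By minimality of $\mathcal{S}$,
\[
|C(\mathcal{S})| = |\mathcal{S}| \leq |\mathcal{S}_C| \leq |C|.
\]
Since this holds for every vertex cover $C$, the cover $C(\mathcal{S})$ is a minimum vertex cover of $G$.

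The main obstacle is a conceptual rather than a technical one: ensuring that the bijective accounting $|C(\mathcal{S})| = |\mathcal{S}|$ really follows from minimality, since the definition of star decomposition a priori allows several stars to share a center. The merging argument above is what resolves this and ties the combinatorics of $\mathcal{S}$ to the combinatorics of vertex covers. Everything else is a straightforward matter of constructing a star decomposition from a vertex cover and comparing cardinalities.
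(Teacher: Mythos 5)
Your proof is correct and follows essentially the same route as the paper's: both establish that center sets of star decompositions are vertex covers, and that every vertex cover is the center set of some star decomposition of no larger size, so the minimum star decomposition size equals the minimum vertex cover size and the conclusion follows. Your extra merging argument showing distinct stars have distinct centers is a nice explicit touch the paper leaves implicit, though it is not strictly needed, since the weaker inequality $|C(\mathcal{S})| \le |\mathcal{S}|$ already suffices for your chain of inequalities.
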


\begin{proof}
Recall that the center set $C(\s)$ of any star decomposition $\s$ is a vertex cover of $G$.  
Therefore, any minimum star decomposition has to be at least as large as any minimum vertex cover of $G$.  
Suppose that for any minimum vertex cover $C$ of $G$ we can find a star decomposition of $G$ with center set $C$.  
Then it follows that any minimum star decomposition has size exactly that of a minimum vertex cover of $G$.  
Moreover, the center set of any minimum star decomposition must be a minimum vertex cover.  
Thus, to complete the proof, we need only show that any minimum vertex cover of $G$ is the center set of some star decomposition of $G$.   

For a node $v$ of a graph $G$ we let $N[v]$ denote the neighbors of $v$ in $G$ including the node $v$ itself.   
Suppose that $C = \{c_1,\ldots,c_k\}$ is a minimum vertex cover of $G$.  
Let $\s(C) = \{S_1,\ldots,S_k\}$ denote the star decomposition of $G$ given by setting
\begin{equation*}
\begin{split}
S_1 &:= \langle N[c_1] \rangle, \\
S_i &:= \langle N[c_i]\backslash\left(\cup_{j\leq i}N[c_j]\right)\rangle, \mbox{for $i>1$}.\\
\end{split}
\end{equation*}
Since $\s$ is a star decomposition of $G$ with center set $C$, this completes the proof.
\end{proof}

\begin{lemma}
\label{lem: minimum vertex cover gives star decompositions}
Suppose $C$ is a minimum vertex cover of $G$ and $\s$ is any star decomposition of $G$ with center set $C$.  
Then $\s$ is a minimum star decomposition.   
\end{lemma}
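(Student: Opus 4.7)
The plan is to deduce Lemma~\ref{lem: minimum vertex cover gives star decompositions} directly from Lemma~\ref{lem: center set is a minimum vertex cover} by a simple size comparison. First, I would recall from (the proof of) Lemma~\ref{lem: center set is a minimum vertex cover} that the size of a minimum star decomposition of $G$ coincides with the size of a minimum vertex cover of $G$; since $C$ is a minimum vertex cover, this common value is $|C|$. In particular, $|\s'| \geq |C|$ for every star decomposition $\s'$ of $G$.

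Second, I would verify that the given $\s$ attains this lower bound. Since $\s$ has center set $C$, and distinct stars of a decomposition are indexed by distinct centers (two stars sharing a center could always be merged into a single larger star, and such configurations are not considered separately), we have $|\s| = |C(\s)| = |C|$. Combining with the lower bound from the previous step, $\s$ has the minimum possible size, and is therefore a minimum star decomposition.

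The only mildly subtle point is the identity $|\s| = |C(\s)|$, which is implicit in the conventions used throughout the paper and mirrors the enumeration of stars in the construction $\s(C) = \{S_1, \ldots, S_k\}$ appearing in the proof of Lemma~\ref{lem: center set is a minimum vertex cover}. Beyond that, the argument is a one-line comparison of cardinalities, so this lemma serves as a clean converse to Lemma~\ref{lem: center set is a minimum vertex cover} and requires no new ideas.
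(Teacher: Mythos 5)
Your proof is correct and takes essentially the same route as the paper's: the lower bound comes from the center set of any star decomposition being a vertex cover, tightness comes from the construction in the proof of Lemma~\ref{lem: center set is a minimum vertex cover}, and the conclusion follows by comparing cardinalities. The identity $|\s| = |C(\s)|$ that you flag is left just as implicit in the paper's own argument, so your version is, if anything, slightly more careful on that point.
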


\begin{proof}
Recall that the center set $C(\s)$ of any star decomposition $\s$ of $G$ is a vertex cover of $G$.  
Thus, just as stated in the proof of Lemma~\ref{lem: center set is a minimum vertex cover}, we know that any star decomposition of $G$ is at least as large as any minimum vertex cover of $G$.  
By the construction in the proof of Lemma~\ref{lem: center set is a minimum vertex cover}, we know in fact that this lower bound is tight.  
Thus, any star decomposition with center set that is a minimum vertex cover must have minimum size.
\end{proof}

\begin{lemma}
\label{lem: minimum star decompositions of maximum weight}
Suppose $\s$ is a minimum star decomposition of $G$ with cardinality vector $v(\s)\in\R^{|E|}$ such that $v(\s)^Tv(\s)$ is maximum over all star decompositions of $G$.  
Then $\s$ is induced by some DAG with skeleton $G$.  
\end{lemma}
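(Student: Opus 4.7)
The plan is to identify the desired DAG directly from $\s$. By Lemma~\ref{lem: center set is a minimum vertex cover}, the center set $C$ of $\s$ is a minimum vertex cover of $G$ and each $c\in C$ centers a unique star $S_c\in\s$. Orient each edge $e\in E$ toward the center of the star in $\s$ containing $e$. This gives a well-defined orientation of $G$, and a short bookkeeping check shows that for this orientation the star induced at each $c\in C$ is exactly $S_c$ while the star induced at each $v\in V\setminus C$ is trivial, so the reduction of the induced star decomposition of this oriented graph is precisely $\s$. The lemma therefore reduces to proving that this orientation is acyclic.

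Since $V\setminus C$ is independent (as $C$ is a vertex cover) and every edge meeting $V\setminus C$ is oriented into $C$, any directed cycle must lie inside $G[C]$. Suppose for contradiction that such a cycle $c_1\to c_2\to\cdots\to c_n\to c_1$ exists (indices mod $n$), so that the edge $c_jc_{j+1}$ belongs to $S_{c_{j+1}}$ for each $j$. For every $j$ we have $|S_{c_{j+1}}|\geq 2$: otherwise $S_{c_{j+1}}=\{c_jc_{j+1}\}$ is a singleton, and moving its lone edge into $S_{c_j}$ yields a star decomposition of $G$ with $|\s|-1$ stars, contradicting the minimality of $\s$. Because the cyclic sequence $|S_{c_1}|,\ldots,|S_{c_n}|$ cannot be strictly increasing all the way around, there is an index $j$ with $|S_{c_j}|\geq|S_{c_{j+1}}|$. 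The exchange that transfers $c_jc_{j+1}$ from $S_{c_{j+1}}$ into $S_{c_j}$ leaves both stars nontrivial and preserves the center set $C$, so by Lemma~\ref{lem: minimum vertex cover gives star decompositions} the resulting decomposition $\s'$ is again a minimum star decomposition. A direct computation gives
\[
v(\s')^T v(\s')-v(\s)^T v(\s)=2\bigl(|S_{c_j}|-|S_{c_{j+1}}|\bigr)+2\;\geq\;2,
\]
contradicting the maximality of $v(\s)^Tv(\s)$ over all star decompositions. Hence no cycle exists and the orientation is a DAG on $G$ inducing $\s$ after reduction.

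The delicate step is the exchange argument, which must simultaneously ensure that $\s'$ is still a \emph{minimum} star decomposition (so that it is a legitimate competitor against $\s$) and that $v^Tv$ strictly increases even when all stars on the cycle happen to be the same size. Both concerns hinge on the auxiliary bound $|S_{c_{j+1}}|\geq 2$, which is where the minimality hypothesis on $\s$ is used essentially, together with the free $+2$ in the displayed identity that makes the increment strictly positive regardless of the sign of $|S_{c_j}|-|S_{c_{j+1}}|$.
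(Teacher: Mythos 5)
Your argument is correct, and although it shares the paper's setup---orient every edge toward the center of the star containing it, note that this orientation induces $\s$ after reduction, and observe that any directed cycle must lie inside the center set---the way you reach the contradiction is genuinely different. The paper repairs the cyclic orientation \emph{globally}: it repeatedly selects a highest-$G$-degree vertex on a surviving directed cycle, reorients it into a sink, and argues that the DAG eventually obtained induces a minimum star decomposition whose cardinality vector is lexicographically larger, hence has larger $v^Tv$. You instead perform a single \emph{local} exchange: on a directed cycle $c_1\to\cdots\to c_n\to c_1$ you pick $j$ with $|S_{c_j}|\geq|S_{c_{j+1}}|$ (always possible cyclically) and transfer the edge $c_jc_{j+1}$ into $S_{c_j}$, computing explicitly that $v^Tv$ grows by $2(|S_{c_j}|-|S_{c_{j+1}}|)+2\geq 2$. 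Your route buys rigor at the decisive step: the strict increase is an identity rather than a consequence of a lexicographic comparison, which for vectors of fixed coordinate sum does not by itself force an increase of $v^Tv$ (e.g.\ $(4,1,1,1,1)$ is lex-greater than $(3,3,2)$ yet has smaller squared length), so your version is the tighter of the two. Two small remarks: since the hypothesis is that $v(\s)^Tv(\s)$ is maximal over \emph{all} star decompositions, you do not actually need $\s'$ to be minimum, so the appeal to Lemma~\ref{lem: minimum vertex cover gives star decompositions} is harmless but dispensable; and your auxiliary bound $|S_{c_{j+1}}|\geq 2$, justified by noting that a singleton star on the cycle could be absorbed into $S_{c_j}$ against minimality of $\s$, is correctly identified as the one place where minimality is essential (it guarantees $\s'$ keeps the same center set and that the exchange is well defined).
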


\begin{proof}
Note first that any star decomposition $\s = \{S_1,\ldots,S_k\}$ of $G$ is induced by some directed, but not necessarily acyclic, graph $\mathcal{G}(\s)$.  
Namely, $\mathcal{G}(\s)$ is the directed graph whose arrows are given by directing all edges of $S_i$ so that their heads are at the center node of $S_i$ for all $i\in[k]$.  
Since each edge of $G$ appears in exactly one star in $\s$, this definition yields a unique directed graph. 

For the sake of contradiction, suppose $\s$ is a minimum star decomposition of $G$ for which $v(\s)^Tv(\s)$ is maximized, but $\s$ is not induced by a DAG.  
Then $\s$ is induced by the directed graph $\mathcal{G}(\s)_0:=\mathcal{G}(\s)$ constructed in the previous paragraph.  
By assumption, $\mathcal{G}(\s)_0$ contains some directed cycles.  
Notice that if $v$ is any node contained in a directed cycle, then by the construction of $\mathcal{G}(\s)_0$ we know $v\in C(\s)$, since $v$ has nonzero indegree in $\mathcal{G}(\s)_0$.  
Therefore, all vertices in all directed cycles in $\mathcal{G}(\s)_0$ lie in the center set $C(\s)$.  

Let $v_0$ be a node of highest $G$-degree that is contained in a directed cycle in $\mathcal{G}(\s)_0$. Reorient all arrows of $\mathcal{G}(\s)_0$ so that $v_0$ is a sink, and denote the resulting directed graph by $\mathcal{G}(\s)_1$.  Notice that the directed cycles in $\mathcal{G}(\s)_1$ are precisely the directed cycles of $\mathcal{G}(\s)_0$ that do not use the node $v_0$.  
In particular, $\mathcal{G}(\s)_1$ contains strictly less directed cycles than $\mathcal{G}(\s)_0$.  
This is because changing node $v_0$ into a sink eliminated precisely the directed cycles that passed through $v_0$.  

We iterate this procedure as follows: Let 
$$v_i\in V\setminus \bigcup_{j=1}^{i-1} N[v_j]$$
be a node of highest $G$-degree that is contained in a directed cycle in $\mathcal{G}(\s)_i$. Reorient all arrows of $\mathcal{G}(\s)_i$ so that $v_i$ is a sink, and denote the resulting directed graph by $\mathcal{G}(\s)_{i+1}$. Note that $\mathcal{G}(\s)_{i+1}$ contains strictly less directed cycles than $\mathcal{G}(\s)_i$. Therefore, iterating this process must result in some DAG $\mathcal{G}(\s)_m$. 

The center set of the (reduced) induced star decomposition of $\mathcal{G}(\s)_m$ is contained in the center set of $\s$, since all nodes on any directed cycle in $\mathcal{G}(\s)_0$ are contained in $C(\s)$.  
Therefore, since $\s$ is a minimum star decomposition, so is $\s(\mathcal{G}(\s)_m)$. 

Now consider the associated vectors $v(\s), v(\s(\mathcal{G}(\s)_m))\in\R^{|E|}$.  
Since at each step in the construction of $\mathcal{G}(\s)_m$ we selected a center node $v_i$ of highest possible degree, then $v(\s)\prec_{\mbox{lex}}v(\s(\mathcal{G}(\s)_m))$, and in particular 
$$
v(\s)^Tv(\s)<v(\s(\mathcal{G}(\s)_m))^Tv(\s(\mathcal{G}(\s)_m)), 
$$
which is a contradiction.  
\end{proof}

In some special instances, when the minimum star decompositions of a graph $G$ are well-understood, we can use this theory to compute $\m(G)$.  
Recalling Example~\ref{ex: minimum star decompositions}, notice that the minimum star decompositions of $C_4$ are all isomorphic to one another as forests, and each minimum star decomposition of $C_4$ is induced by a DAG.  
With this example in mind, we prove the following theorem. 
\begin{theorem}
\label{thm: minimum star decompositions formula}
Let $G$ be a triangle-free, undirected graph whose minimum star decompositions are all isomorphic to one another as forests.  
Then given any minimum star decomposition
$
\s(G) = \{S_1,\ldots,S_k\}
$
of $G$ the immorality number of $G$ is
$$
\m(G) = \sum_{i=1}^k{|S_i|\choose 2}.
$$
\end{theorem}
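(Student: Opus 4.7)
The plan is to translate the problem into an extremal question about star decompositions, apply Lemma~\ref{lem: minimum star decompositions of maximum weight} together with the isomorphism hypothesis to secure the lower bound, and then handle the matching upper bound via a convexity-plus-reorientation argument.

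First, I would exploit triangle-freeness to simplify the immorality count. In any DAG $\mathcal{G}$ with skeleton $G$, every parent of a node $v$ is a $G$-neighbor of $v$; since $G$ contains no triangle, any two such parents are non-adjacent in $G$ and therefore induce an immorality $X\to v\leftarrow Y$. Writing $\s(\mathcal{G})=\{S_v:v\in V\}$ for the in-star decomposition induced by $\mathcal{G}$, this gives
$$
\#\mathrm{immoralities}(\mathcal{G}) \;=\; \sum_{v\in V}\binom{|S_v|}{2} \;=\; \sum_{S\in\s(\mathcal{G})}\binom{|S|}{2}.
$$
Hence $\m(G)$ equals the maximum of $\sum_{S}\binom{|S|}{2}$ over all DAG-induced star decompositions of $G$, and it suffices to show this maximum equals $\sum_{i=1}^{k}\binom{|S_i|}{2}$.

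For the lower bound I would invoke Lemma~\ref{lem: minimum star decompositions of maximum weight}: a minimum star decomposition whose cardinality vector $v(\s)$ maximizes $v(\s)^Tv(\s)$ is induced by some DAG. Under the hypothesis that every minimum star decomposition of $G$ is isomorphic as a forest, they all share the same multiset of star sizes, hence the same value of $v(\s)^Tv(\s)$. Consequently there is a DAG $\mathcal{G}^*$ whose reduced induced star decomposition is isomorphic as a forest to $\s(G)=\{S_1,\ldots,S_k\}$, and by the display above this DAG realizes $\sum_{i=1}^{k}\binom{|S_i|}{2}$ immoralities, giving $\m(G)\geq \sum_{i=1}^{k}\binom{|S_i|}{2}$.

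For the matching upper bound I would use the identity
$$
\sum_{S\in\s}\binom{|S|}{2} \;=\; \tfrac{1}{2}\bigl(v(\s)^Tv(\s)-|E|\bigr),
$$
reducing the task to proving $v(\s(\mathcal{G}))^Tv(\s(\mathcal{G})) \leq v(\s(G))^Tv(\s(G))$ for every DAG $\mathcal{G}$ on $G$. Because $\binom{n}{2}$ is strictly convex, under the fixed edge-sum $\sum|S|=|E|$ the quantity $v^Tv$ favors fewer, larger stars; a minimum star decomposition has the fewest possible stars and, by the isomorphism hypothesis, a fixed size profile. Adapting the reorientation scheme from the proof of Lemma~\ref{lem: minimum star decompositions of maximum weight}, I would iteratively modify $\mathcal{G}$ by absorbing low-degree centers into higher-degree neighbors, checking that each step preserves acyclicity and weakly increases $v(\s(\mathcal{G}))^Tv(\s(\mathcal{G}))$, until the induced reduced star decomposition is minimum and hence of the target profile.

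The main obstacle is this upper-bound reorientation: naive star-merging is not always a legal operation on star decompositions, and local reassignments of a star's edges among small neighboring stars can decrease $v^Tv$. The isomorphism hypothesis is precisely what pins down the extremal target: it forces every reorientation sequence to terminate at a decomposition with cardinality vector equal to $v(\s(G))$, so the maximum cannot exceed $\sum_{i=1}^{k}\binom{|S_i|}{2}$. Without this hypothesis, non-isomorphic minimum decompositions could give different $v^Tv$ values and the clean closed-form formula would break.
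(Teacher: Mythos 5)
Your reduction of the immorality count to $\sum_{v}\binom{|S_v|}{2}$ via triangle-freeness, and your use of Lemma~\ref{lem: minimum star decompositions of maximum weight} together with the isomorphism hypothesis to realize $\sum_{i=1}^k\binom{|S_i|}{2}$ by an actual DAG, both match the paper. The genuine gap is in your upper bound. You correctly identify the crux --- showing that no DAG-induced star decomposition can beat a minimum one in $\sum_S\binom{|S|}{2}$ --- but your proposed reorientation/absorption argument is never carried out, and your closing inference is a non sequitur: the fact that every reorientation sequence \emph{terminates} at a decomposition with profile $v(\s(G))$ bounds the value at the starting DAG only if each individual step weakly increases $v^Tv$, which is precisely the monotonicity you concede can fail (``local reassignments of a star's edges among small neighboring stars can decrease $v^Tv$''). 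As written, the upper bound is asserted rather than proved.

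The paper closes this step differently and more globally. Since every star decomposition satisfies $\sum_i|S_i|=|E|$, after padding with trivial stars to length $k=|E|$ every cardinality vector lies in the dilated simplex $k\Delta_k$, and maximizing $\sum_i\binom{x_i}{2}=\tfrac12\bigl(\mathbf{x}^T\mathbf{x}-|E|\bigr)$ amounts to maximizing $\mathbf{x}^T\mathbf{x}$ over the realizable lattice points of that simplex. The paper argues that this norm is maximized at the vectors closest to the boundary, i.e.\ at the \emph{minimum} star decompositions (those with the most trivial stars); under the isomorphism hypothesis these all share a single profile, and by Lemma~\ref{lem: minimum star decompositions of maximum weight} one of them is induced by a DAG, which delivers the upper and lower bounds simultaneously. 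To repair your route you would need either to prove monotonicity of $v^Tv$ along your reorientation steps or to replace that step with a direct extremal argument of this kind. Note also that your lower bound quietly assumes the hypothesis of Lemma~\ref{lem: minimum star decompositions of maximum weight} is satisfiable --- that some minimum star decomposition is $v^Tv$-maximal among \emph{all} star decompositions --- which is the same unproved extremal claim resurfacing.
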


\begin{proof}
Since the maximum size of a minimum star decomposition is $|E|$, we can simply assume $k=|E|$ by filling out the set with trivial stars.  
That is, without loss of generality we assume that all star decompositions considered have the same cardinality $k = |E|$, but may contain trivial stars.  
A minimum star decomposition is then simply one with the maximum number of trivial stars.  

Recall that for every DAG $\mathcal{G}$ on $G$ we can construct the induced star decomposition $\s(\mathcal{G}) = \{S_1,\ldots,S_k\}$.  
Since $G$ is triangle-free, the number of immoralities in $\mathcal{G}$ is precisely 
$
\sum_{i=1}^k{|S_i|\choose 2}.
$
Each such induced star decomposition admits a vector in $\R^k$ for each permutation $\sigma\in S_k$ of cardinalities
$
(|S_{\sigma(1)}|,|S_{\sigma(2)}|,\ldots,|S_{\sigma(k)}|)\in\R^k,
$
and we let $v(\mathcal{G})$ denote any one of these vectors.  
More generally, any star decomposition $\s$ of $G$ admits such a vector of cardinalities for each permutation $\sigma\in S_k$, any one of which we denote by $v(\s)\in\R^k$.  
Let $\mathcal{V}(G)$ denote the set of all possible choices of vectors $v(\s)$ for all possible star decompositions of $G$.  
Then our goal is to maximize the objective function
$
\sum_{i=1}^k{x_i\choose 2}
$
over the set $\mathcal{V}(G)\subset\ZZ^k_{\geq0}$.  
Since the objective function satisfies
$$
\sum_{i=1}^k{x_i\choose 2} = \frac{1}{2}\left(\sum_{i=1}^kx_i^2-\sum_{i = 1}^kx_i\right),
$$
and for all $(x_1,\ldots,x_k)\in\mathcal{V}(G)$, we have that
$
\sum_{i = 1}^kx_i = |E| = k,
$
then we are interested in solving the integer optimization problem
\begin{center}
\begin{tabular}{r c c l}
maximize	&	&	& ${\bf x}^T{\bf x}$				\\
subject to	&	&	& $\sum_{i=1}^kx_i = k$,			\\
		&	&	& ${\bf x}\in\ZZ^k_{\geq 0}$,	\\
		&	&	& ${\bf x}\in\mathcal{V}(G)$.				\\
\end{tabular}
\end{center}
The presentation of this optimization problem is redundant, but it is to emphasize the fact that any vector in $\mathcal{V}(G)$ lies in the $k^{th}$ dilate of the probability simplex $\Delta_k$, which we denote by $k\Delta_k$.  
Therefore, we are simply maximizing the length over all vectors in the probability simplex that also lie in the set $\mathcal{V}(G)$.  
Since the value of ${\bf x}^T{\bf x}$ strictly increases as we approach the boundary of $k\Delta_k$ then the star decompositions with the maximum number of trivial stars will yield the maximum value of the objective function.  These are the minimum star decompositions, all of which are isomorphic as trees, and therefore have the same vectors $v(\s)$ up to a permutation of coordinates.  
Since we have assumed that at least one of these star decompositions is induced by a DAG, it follows that the maximum value of the original objective function 
$
\sum_{i=1}^k{x_i\choose 2}
$ 
is the immorality number of $G$.  
\end{proof}

Collectively, Lemmas~\ref{lem: center set is a minimum vertex cover}, \ref{lem: minimum star decompositions of maximum weight}, and Theorem~\ref{thm: minimum star decompositions formula} allow us to prove Theorem~\ref{thm: np-complete}.
\subsection{PROOF OF THEOREM~\ref{thm: np-complete}}
\label{subsec: proof of theorem np-complete}
Let $G$ be a triangle-free graph, and suppose that we have a polynomial time algorithm that returns a DAG $\mathcal{G}^*$ with skeleton $G$ for which $\mathcal{G}^*$ has the maximum number of immoralities.  
By Lemma~\ref{lem: minimum star decompositions of maximum weight} and Theorem~\ref{thm: minimum star decompositions formula}, we know that the maximum value of 
$
\sum_{i = 1}^{|E|}{|S_i|\choose 2}
$
is achieved by a minimum star decomposition induced by a DAG.  
Since the value of 
$
\sum_{i = 1}^{|E|}{|S_i|\choose 2}
$
 is exactly equal to the number of immoralities in a DAG with a triangle-free skeleton, it follows that our DAG $\mathcal{G}^*$ induces a minimum star decomposition $\s(\mathcal{G}^*)$ that maximizes 
 $
\sum_{i = 1}^{|E|}{|S_i|\choose 2}.
$
We know by Lemma~\ref{lem: center set is a minimum vertex cover} that the center set $C(\s(\mathcal{G}^*))$ is a minimum vertex cover of $G$.  
Therefore, we have a polynomial time algorithm for computing a minimum vertex cover of the triangle-free graph $G$. 
It is clear that a polynomial time algorithm for MAXIMUM IMMORALITIES for arbitrary graphs trivially yields a polynomial time algorithm for MAXIMUM IMMORALITIES for triangle-free graphs.  
Therefore, since MINIMUM VERTEX COVER is NP-complete for triangle-free graphs, we know that the general instance of MAXIMUM IMMORALITIES is NP-hard.  
This completes the proof of Theorem~\ref{thm: np-complete}.   \hfill$\square$

\begin{remark}
\label{rmk: self-reducibility}
Recall that there is trivially a polynomial time Turing reduction of MINIMUM VERTEX COVER to VERTEX COVER.  
Conversely, it is well-known that VERTEX COVER is \emph{self-reducible}.  
That is, given a polynomial time algorithm for VERTEX COVER one can find a polynomial time algorithm solving MINIMUM VERTEX COVER.  
Collectively, this says that solving MINIMUM VERTEX COVER is no more or no less hard than solving VERTEX COVER.  
Since the former direction is trivial, the critical observation made here is the self-reducibility of VERTEX COVER.  

The proof of self-reducibility for VERTEX COVER is standard across many NP-complete structural search problems for graphs, and it goes as follows.  
Given a graph $G = (V,E)$, the minimum size of a vertex cover must be between $0$ and $|V|$.  
Thus, by a binary search, we can determine in polynomial time the size $k^\ast$ of a minimum vertex cover of $G$.  
Then, to recover a vertex cover $C$ with size $k^\ast$ of $G$, we first pick a vertex $v$ and delete it from $G$.  
If the resulting graph has a vertex cover of size $k^\ast-1$, then $v$ is in a minimum vertex cover of $G$, if not we return the vertex $v$, and repeat with another vertex.  
Iterating this procedure produces a minimum vertex cover of $G$ in polynomial time.  

While the self-reducibility of many other graph structure search problems are proved using a similar argument, this proof is unusable for IMMORALITIES.  
The analogous argument for IMMORALITIES would require considering all subsets of neighbors of the node $v$ and deleting the corresponding star.  
Since the number of such queries for a given vertex $v$ is only bounded by ${\deg(v)\choose 2}$, this algorithm is not polynomial in time.  
However, this does not prove that IMMORALITIES is not self-reducible, nor does it prove that IMMORALITIES is not NP-complete. 
\end{remark}

\subsection{EXAMPLES OF IMMORALITY NUMBERS}
\label{subsec: computing immorality numbers in some special cases}
As with most NP-hard problems, the problem may become tractable when restricted to special cases.  
We now present a few cases in which star decompositions allow us to compute $\m(G)$ via an application of Theorem~\ref{thm: minimum star decompositions formula}.  
For some graphs, Theorem~\ref{thm: minimum star decompositions formula} makes the computation of $\m(G)$ very simple.  
For instance, the unique minimum star decomposition of a star $S_p$ is itself, and therefore $\m(G)= {p \choose 2}$.  
Similarly, the graph $K_{2,p}$  can be decomposed into two stars, both of which are isomorphic to $K_{1,p}$, in precisely one way, and therefore this is its unique minimum star decomposition.  
It follows from Theorem~\ref{thm: minimum star decompositions formula} that $\m(K_{2,p}) = 2{p\choose 2}$.  
On the other hand, Theorem~\ref{thm: minimum star decompositions formula} certainly has its limitations.  
For instance, let $S_2(p,q)$ denote the gluing of a $p$-star and a $q$-star; i.e. an edge $\{i,j\}$ with $p$ leaves attached to $i$ and $q$ leaves attached to $j$).  
Then, Theorem~\ref{thm: minimum star decompositions formula} implies that $\m(G_2(p,p)) = {p+1\choose 2}+{p\choose 2}$.  
However, if $p\neq q$, then the minimum star decompositions of $G_2(p,q)$ are all size two but need not be isomorphic.  
Therefore, Theorem~\ref{thm: minimum star decompositions formula} does not apply.  

In general, it can be difficult to determine if the minimum star decompositions of a graph are all isomorphic.  
We end this section by computing $\m(G)$ for some slightly more complicated graphs, thereby illustrating the rapidly increasing difficulty level of the problem.  
In subsection~\ref{subsec: the complete bipartite graph} we compute $\m(G)$ for the complete bipartite graph $K_{p,p}$ and in subsection~\ref{subsec: some triangle-free circulants} for some special circulant graphs.

\subsubsection{The complete bipartite graph $K_{p,p}$}
\label{subsec: the complete bipartite graph}
Gillespie and Perlman (2001) note that the maximum number of induced $3$-paths over all skeletons on $p$ nodes, for each $p\leq10$ is given by the complete bipartite graph $K_{\left\lfloor\frac{p}{2}\right\rfloor,\left\lceil\frac{p}{2}\right\rceil}$.  
The number of induced $3$-paths in the graph $K_{\left\lfloor\frac{p}{2}\right\rfloor,\left\lceil\frac{p}{2}\right\rceil}$ is quickly seen to be 
$
a_p = \left\lfloor\frac{p}{2}\right\rfloor\left\lceil\frac{p}{2}\right\rceil\frac{p-2}{2},
$
which is sequence A111384 of (OEIS, 2003).  
Since induced $3$-paths in an undirected graph $G$ are exactly the possible locations of immoralities in a DAG with skeleton $G$, it is reasonable to ask for the immorality number of the complete bipartite graph $K_{p,p}$.  
As one would hope, the immorality number of $K_{p,p}$ turns out to be exactly one half the number of induced $3$-paths.  
We now use Theorem~\ref{thm: minimum star decompositions formula} to compute the immorality number of $K_{p,p}$ via star decompositions.  
To do so, we make one additional observation.
\begin{lemma}
\label{lem: minimum star decompositions of K_p,p}
The minimum star decompositions of $K_{p,p}$ are all isomorphic to 
$$
\{\underbrace{K_{1,p},K_{1,p},\ldots,K_{1,p}}_{p\text{ times}}\}.
$$
\end{lemma}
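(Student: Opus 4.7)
The plan is to apply Lemmas~\ref{lem: center set is a minimum vertex cover} and~\ref{lem: minimum vertex cover gives star decompositions} to reduce the statement to identifying the minimum vertex covers of $K_{p,p}$, and then to argue that once the center set of a star decomposition of $K_{p,p}$ is fixed, the decomposition itself is forced.

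First I would identify the minimum vertex covers of $K_{p,p}$. Let $A$ and $B$ denote the two parts of the bipartition, each of size $p$. A straightforward argument shows that the minimum vertex cover has size exactly $p$ and that the only such covers are $A$ and $B$: if $C$ is a vertex cover with $|C \cap A| = a$ and $|C \cap B| = b$, then the uncovered vertices form sets of size $p-a$ in $A$ and $p-b$ in $B$, and every pair of such uncovered vertices is joined by an edge of $K_{p,p}$, so we must have $(p-a)(p-b) = 0$, i.e.\ $C \supseteq A$ or $C \supseteq B$. Combined with $|C| \leq p$, this forces $C = A$ or $C = B$.

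Next, by Lemma~\ref{lem: center set is a minimum vertex cover}, the center set of any minimum star decomposition $\s$ of $K_{p,p}$ is a minimum vertex cover, hence $C(\s) \in \{A, B\}$. Conversely, Lemma~\ref{lem: minimum vertex cover gives star decompositions} guarantees that any star decomposition with such a center set is minimum. The final step is to show that fixing $C(\s) = A$ determines $\s$ uniquely. Each star in $\s$ has its center in $A$, and since $K_{p,p}$ is bipartite, its leaves lie in $B$. Because every edge $\{a,b\}$ of $K_{p,p}$ has exactly one endpoint in $A$ (namely $a$), it must belong to the star centered at $a$. Hence the star centered at $a$ contains all $p$ edges incident to $a$ and is isomorphic to $K_{1,p}$, giving the claimed multiset. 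The argument for $C(\s) = B$ is symmetric.

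There is no serious obstacle here: the bipartite structure of $K_{p,p}$ makes both the minimum-vertex-cover classification and the rigidity of the star decomposition almost immediate. The only point requiring mild care is ruling out mixed covers $C$ with vertices in both $A$ and $B$, which is handled by the $(p-a)(p-b) = 0$ observation above.
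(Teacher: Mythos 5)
Your proof is correct, and it establishes the same key fact as the paper --- that the center set of a minimum star decomposition of $K_{p,p}$ must be exactly $A$ or exactly $B$, after which the decomposition is forced edge by edge. The only real difference is organizational: you route the center-set classification through Lemmas~\ref{lem: center set is a minimum vertex cover} and~\ref{lem: minimum vertex cover gives star decompositions} together with the standard $(p-a)(p-b)=0$ characterization of minimum vertex covers of $K_{p,p}$, whereas the paper reproves this classification inline by an ad hoc contradiction on mixed center sets; your version is arguably cleaner and makes the final rigidity step (every edge $\{a,b\}$ can only lie in the star centered at its unique endpoint in the center set) more explicit than the paper does.
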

\begin{proof}
We prove a slightly stronger statement.  
Let $N[v]$ denote the subgraph of a graph $G$ induced by the vertex $v$ and its set of neighbors.  
Let the vertices of $K_{p,p}$ be the partitioned set $A\sqcup B$ where $A := \{a_1,\ldots,a_p\}$ and $B := \{b_1,\ldots,b_p\}$.  
We claim that the minimum star decompositions of $K_{p,p}$ are only 
$
\{N[a_i] : i\in[p]\}
$
and
$
\{N[b_i] : i\in[p]\}.
$
To see this assume otherwise.  
Suppose that $\{S_1,\ldots,S_k\}$ is a minimum star decomposition of $K_{p,p}$, and let $c_i$ denote the center of star $S_i$ for all $i\in[k]$.  
We also set $C := \{c_1,\ldots, c_k\}$.  

Suppose first that $k = p$ and that $A\cap C\neq\emptyset$ and $B\cap C\neq\emptyset$.  
Without loss of generality, assume that $A\cap C = \{a_1,\ldots,a_\ell\}$ for some $\ell<k$.  
Then for all $i>\ell$ it must be that $b_i\in B\cap C$, since otherwise the edge $\{a_i,b_i\}$ would not appear in any star in $\{S_1,\ldots,S_k\}$.  
Since $k=p$, it follows that $B\cap C = \{b_{\ell+1},\ldots,b_p\}$.  
However, this means that for all $i\leq \ell$ and $j\geq \ell+1$, the edges $\{a_j, b_i\}$ are not in any star, which is a contradiction.

Now suppose that $k<p$.  
It is quick to see that if $C\subset A$ or $C\subset B$ then there exist edges of $K_{p,p}$ not contained in stars.  
So $A\cap C\neq\emptyset$.  The proof then follows from applying the same argument as in the case when $k = p$ to derive a contradiction. 
\end{proof}

\begin{theorem}
\label{thm: immorality number of K_p,p}
The immorality number of $K_{p,p}$ is $p{p\choose 2}$.  
\end{theorem}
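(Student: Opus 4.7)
The plan is to derive this as an essentially immediate consequence of Lemma~\ref{lem: minimum star decompositions of K_p,p} together with Theorem~\ref{thm: minimum star decompositions formula}. First I would observe that $K_{p,p}$ is bipartite and hence triangle-free, so the hypotheses of Theorem~\ref{thm: minimum star decompositions formula} concerning the skeleton are met. Next I would invoke Lemma~\ref{lem: minimum star decompositions of K_p,p} to conclude that every minimum star decomposition of $K_{p,p}$ is isomorphic, as a forest, to the disjoint union of $p$ copies of $K_{1,p}$. This is precisely the "all minimum star decompositions isomorphic as forests" hypothesis needed to apply Theorem~\ref{thm: minimum star decompositions formula}.

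Once these two facts are in place, the formula of Theorem~\ref{thm: minimum star decompositions formula} yields
$$
m(K_{p,p}) = \sum_{i=1}^{p}\binom{|K_{1,p}|}{2} = p\binom{p}{2},
$$
which is the desired equality.

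The only subtle point, and the place where I expect the argument to need care, is the implicit requirement in the statement of Theorem~\ref{thm: minimum star decompositions formula} that at least one minimum star decomposition of $K_{p,p}$ actually arises from a DAG on the skeleton. For $K_{p,p}$ this is easy to exhibit directly: letting the parts be $A=\{a_1,\ldots,a_p\}$ and $B=\{b_1,\ldots,b_p\}$, orient every edge from $B$ to $A$. The resulting directed graph is acyclic (its vertices admit the topological order $B$ before $A$), and its induced star decomposition is exactly $\{N[a_i] : i\in[p]\}$, which by Lemma~\ref{lem: minimum star decompositions of K_p,p} is one of the two minimum star decompositions. Alternatively, one can appeal to Lemma~\ref{lem: minimum star decompositions of maximum weight}: since all minimum star decompositions of $K_{p,p}$ have the same cardinality vector (up to permutation), they all attain the maximum value of $v(\s)^T v(\s)$ among minimum star decompositions, and the argument in the proof of Theorem~\ref{thm: minimum star decompositions formula} shows that this maximum is in fact the global maximum over all star decompositions, hence these decompositions are induced by DAGs. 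Either route closes the small gap and completes the proof.
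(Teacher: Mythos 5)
Your proposal is correct and follows essentially the same route as the paper: verify triangle-freeness, invoke Lemma~\ref{lem: minimum star decompositions of K_p,p} to get the isomorphism hypothesis, exhibit a DAG (all edges oriented from $B$ to $A$, i.e.\ $p$ sinks on one side) inducing a minimum star decomposition, and apply Theorem~\ref{thm: minimum star decompositions formula}. Your explicit orientation argument is in fact slightly more careful than the paper's one-line assertion that such a DAG exists.
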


\begin{proof}
Notice that $K_{p,p}$ is triangle-free.  
By Lemma~\ref{lem: minimum star decompositions of K_p,p}, the minimum star decompositions of $K_{p,p}$ are all isomorphic as forests.  
Moreover, any such minimum star decomposition is induced by a DAG $\mathcal{G}$ on $K_{p,p}$ that has exactly $p$ sinks located along either the node set $A$ or $B$.  
The result then follows from Theorem~\ref{thm: minimum star decompositions formula}.
\end{proof}

\subsubsection{Some triangle-free circulants}
\label{subsec: some triangle-free circulants}

Circulant graphs are natural generalizations of the cycle graphs, and both their independence polynomials and independence numbers have been studied extensively (Brown and Hoshino, 2011; Hoshino, 2008).  
However, there is no general formula for the Fibonacci number, the independence number, nor the independence polynomial of these graphs.  
Similarly, computing $M(G;x)$ or $S(G;x)$ is difficult for a general circulant.  
However, as a corollary to Theorem~\ref{thm: minimum star decompositions formula}, we can compute the immorality number of some triangle-free circulants. 

Recall that a circulant on $p$ nodes is a graph whose nodes are identified with $\ZZ/p\ZZ$, and whose edges are given by a specified \emph{connection set} $C\subset\ZZ/p\ZZ$.  
In the undirected setting, we assume $C$ is closed under additive inverses. 
The circulant on $p$ nodes with connection set $C$ is denoted $X(p,C)$ and has edges $\{i,j\}$ for all pairs $i$ and $j$ satisfying $i-j\in C$.  
We often abbreviate the connection set $C$ via a subset of $\left[\lfloor \frac{p}{2}\rfloor\right]$ by omitting the additive inverse of each element.  

\begin{corollary}
\label{cor: immorality numbers for circulants}
Let $p$ be even, and suppose that $X(p,C)$ is a triangle-free circulant graph containing a $p$-cycle for which the maximum independent subset is of size
$\frac{p}{2}$.  
Then 
$$
\m(X(p,C)) = \frac{p}{2}{2|C| \choose 2}.
$$
\end{corollary}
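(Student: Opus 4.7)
The plan is to identify the minimum star decompositions of $X(p,C)$ explicitly and then invoke Theorem~\ref{thm: minimum star decompositions formula}. Since $\alpha(X(p,C)) = p/2$ and the complement of an independent set is a vertex cover, the minimum vertex-cover number is $\tau(X(p,C)) = p/2$. Combining Lemmas~\ref{lem: center set is a minimum vertex cover} and~\ref{lem: minimum vertex cover gives star decompositions}, every minimum star decomposition of $X(p,C)$ therefore contains exactly $p/2$ stars. Under the convention $p/2 \notin C$ implicit in the formula, $X(p,C)$ is $2|C|$-regular, and the existence of a $p$-cycle (which ensures connectedness) gives total edge count $|E| = p|C|$.

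Next I would pin down the individual star sizes by pigeonhole: in a minimum star decomposition $\s = \{S_1,\ldots,S_{p/2}\}$, each star satisfies $|S_i|\leq 2|C|$ because its center has only $2|C|$ neighbors, while $\sum_{i=1}^{p/2}|S_i| = |E| = (p/2)(2|C|)$. The average meeting the maximum forces $|S_i| = 2|C|$ for every $i$, so every minimum star decomposition of $X(p,C)$ is isomorphic as a forest to $\bigsqcup_{i=1}^{p/2} K_{1,2|C|}$. Consequently all minimum star decompositions share the same cardinality vector $v(\s)$ up to coordinate permutation; moreover, subject to $v_i\leq 2|C|$ and $\sum_i v_i = |E|$, concentrating the mass into $p/2$ coordinates of value $2|C|$ uniquely maximizes $v(\s)^T v(\s)$, so this value is in fact the global maximum across all star decompositions of $X(p,C)$. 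Lemma~\ref{lem: minimum star decompositions of maximum weight} then guarantees that these minimum star decompositions are induced by DAGs with skeleton $X(p,C)$.

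With triangle-freeness assumed and the isomorphism and DAG-inducibility of the minimum star decompositions established, Theorem~\ref{thm: minimum star decompositions formula} delivers
\[
\m(X(p,C)) = \sum_{i=1}^{p/2}\binom{|S_i|}{2} = \frac{p}{2}\binom{2|C|}{2}.
\]
The step needing the most care is the pigeonhole identification of every minimum star decomposition with $\bigsqcup_{i=1}^{p/2} K_{1,2|C|}$; once this is in hand, the global maximality of $v(\s)^T v(\s)$ is a short convex-optimization remark (under the given linear and box constraints the extremum concentrates at the upper corner), and the remainder is bookkeeping.
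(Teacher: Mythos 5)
Your proof is correct, but the key step is reached by a genuinely different route from the paper's. The paper pins down the minimum star decompositions by explicitly identifying the maximum independent sets: since $X(p,C)$ contains a $p$-cycle through $1\in C$ and $\alpha = p/2$, the only maximum independent sets are the two alternating vertex classes along that cycle; their complements are the minimum vertex covers, and because each such cover is itself independent, every edge has exactly one endpoint in it, which forces the unique star decomposition $\{\langle N[c_1]\rangle,\ldots,\langle N[c_{p/2}]\rangle\}\simeq\{K_{1,2|C|},\ldots,K_{1,2|C|}\}$; Lemma~\ref{lem: minimum vertex cover gives star decompositions} then certifies minimality. You instead bypass the structure of the independent sets entirely with a regularity/averaging argument: $\tau = p - \alpha = p/2$ forces $p/2$ stars, each star is capped at $2|C|$ by its center's degree, and the sizes sum to $|E| = (p/2)(2|C|)$, so every star is saturated. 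This is cleaner and arguably more robust --- it does not rely on the claim that the alternating sets are the \emph{only} maximum independent sets, and it uses the $p$-cycle hypothesis only insofar as the paper uses it (note, though, that $|E|=p|C|$ follows from $2|C|$-regularity via the handshake lemma, not from the existence of the $p$-cycle as your phrasing suggests). You are also more careful than the paper on two points: you flag the implicit convention $p/2\notin C$ needed for $2|C|$-regularity, and you explicitly verify the hypothesis of Lemma~\ref{lem: minimum star decompositions of maximum weight} (global maximality of $v(\s)^Tv(\s)$ under the box and sum constraints) to secure DAG-inducibility, a step the paper leaves to the proof of Theorem~\ref{thm: minimum star decompositions formula}. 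What the paper's approach buys in exchange is an explicit description of the minimum vertex covers and hence of exactly which (two) star decompositions are minimum, which your counting argument does not provide.
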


\begin{proof}
Recall that a set of nodes in a graph $G$ is a minimum vertex cover if and only if its complement is an independent set in $G$.  
Since $X(p,C)$ contains a $p$-cycle, then without loss of generality we can assume that $1\in C$.  
Since $1\in C$ and the maximum independent subset of $X(p,C)$ is equal to the one of $C_p$ of size $p/2$, then any minimum independent set is given by selecting precisely every other vertex of the graph as we walk along the $p$-cycle given by $1\in C$.  
Moreover, such a vertex set is also a minimum vertex cover.  
Thus, if $\{c_1,\ldots, c_{\frac{p}{2}}\}$ is a maximum independent set in $X(p,C)$, then there is only one possible star decomposition with center set $\{c_1,\ldots, c_{\frac{p}{2}}\}$, namely
$$
\{\langle N[c_1]\rangle,\ldots, \langle N[c_{\frac{p}{2}}] \rangle\}\simeq\{K_{1,2|C|},\ldots,K_{1,2|C|}\}.
$$
Since the only two maximum independent sets in $X(p,C)$ share this property, it follows from Lemma~\ref{lem: minimum vertex cover gives star decompositions} that all minimum star decompositions of $G$ are isomorphic.  
Thus, by Theorem~\ref{thm: minimum star decompositions formula} we conclude that
$
\m(X(p,C)) = \frac{p}{2}{2|C| \choose 2}.
$
\end{proof}

Notice that Corollary~\ref{cor: immorality numbers for circulants} applies to any triangle-free circulant with $p$ even, $1 \in C$, which has all other elements of $C$ being odd.  
On the other hand, we cannot apply the same techniques to compute the immorality numbers for $p$ odd, since such circulants may contain nonisomorphic minimum star decompositions.

\section{COMPUTATIONAL ANALYSIS}
\label{sec: computational analysis}

In this section, we describe the computer program we used to test our conjectures and collect relevant statistics.  This program can be found at \url{https://github.com/aradha/mec_generation_tool}, and it expands on the first computer program written for the enumeration of MECs presented (Gillespie and Perlman, 2001).  
For each skeleton on $p\leq 10$ nodes, the Gillespie and Perlman algorithm logged the maximum number of induced $3$-paths, the maximum number of MECs, the total number of MECs, and the size of each class.  
Our program expands on this original program in two ways: for skeletons on $p\leq 10$ nodes, our program collects more data about each skeleton, and it produces all such data for all triangle-free skeletons on $p\leq12$ nodes.    
The new program now catalogues the same information as the original Gillespie and Perlman algorithm for each skeleton as well as the degree sequence of the skeleton, the number of triangles, and the number of immoralities per MEC.  
This additional data, especially in the triangle-free setting, allows us to more carefully analyze how the structure of the skeleton impacts the number and size of its associated MECs.  
In the following, we first provide a brief description of the algorithm and the hashing scheme used.  
We then validate Theorem~\ref{thm: frequency determinism} and discuss the analogous result in the case of unconnected graphs. 

\subsection{THE ALGORITHM}
\label{subsec: the algorithm}
	\begin{figure}[t!]
	\centering
	\includegraphics[width=0.45\textwidth]{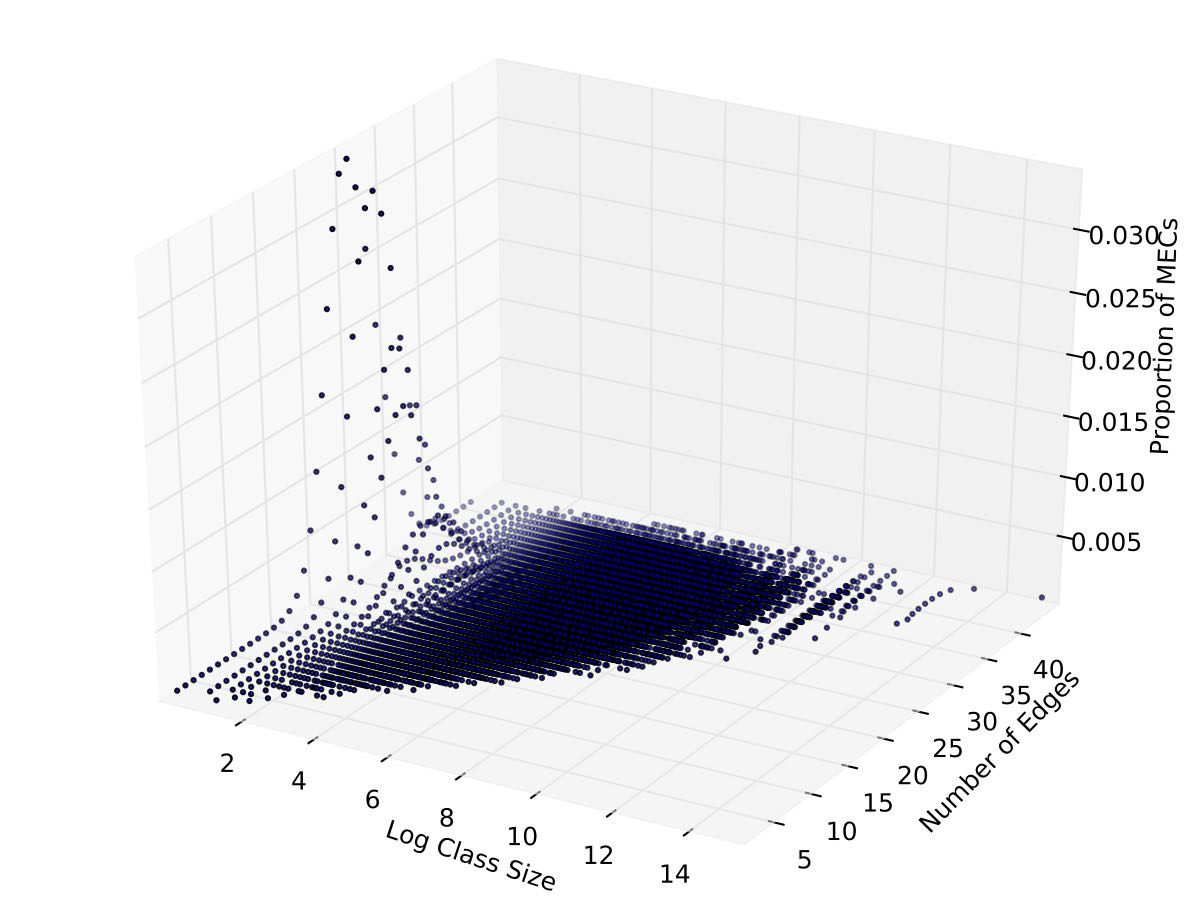}
	\caption{The proportion of MECs on connected graphs with 10 nodes by log class size and number of edges.}
	\label{fig: log-class-size-and-number-of-edges-proportion-of-mecs}
	\end{figure}

There are three main components in our program's data pipeline which we now describe.  
The first component is the main class that reads in skeleton data generated using tools from {\tt nauty} and {\tt Traces} (McKay and Piperno, 2014). The second component is a DAG generator that directly generates all DAGs on a given skeleton.  
Such a generator is realized using the algorithm published by Barbosa and Swarcfiter (1999).  
It is essential to directly generate all DAGs rather than generating all directed graphs and then pruning out the ones containing cycles, since the number of DAGs dominates the number of directed graphs for a large number of vertices.  

The final main component is a DAG enumerator that generates the frequency vector $M(G)_{\mbox{freq}}$ when given the DAGs on a given skeleton $G$.
In order to generate the number of MECs of each size on a given skeleton, this component creates a bit representation for each MEC by first creating a bit mask of the possible immoralities that could occur in the skeleton.  
Each DAG is then traversed.  
If three vertices are found to be in an immorality then the Cantor pairing function is used to hash the triple of their integer labels to the location of the bit in the immorality bit mask.  
Since the Cantor pairing function is invertible and since the number of vertices in each graph is small, we have a valid, non-overflowing hash function.
After comparing the resulting hashes for all DAGs on the given skeleton, a pair of integers is returned for each MEC: the number of immoralities in the MEC and the size of the MEC.  
It is an important feature of the algorithm that this component of the pipeline has access to data on the given skeleton.  
This allows us to collect data on the skeleton in relation to each MEC.  
Using this, for each skeleton we record the number of induced $3$-paths, the degree sequence, the number of edges, and the number of triangles.  
To handle the around 12 million undirected graphs on 10 nodes, we split these graphs into approximately 500 files across 10 directories, allocating 16 threads to process each directory. 
Running this process in parallel takes 5 days as compared to the 253 CPU hours (approximately 94 days) by Gillispie and Perlman.

\subsection{CORRECTNESS OF THE ALGORITHM}
\label{subsec: correctness of the algorithm}
To verify the correctness of our implementation, we matched our program's output with that of the algorithm in (Gillespie and Perlman, 2001).  
In Figure~\ref{fig: log-class-size-and-number-of-edges-proportion-of-mecs} for instance, we can use our program to reproduce the same distribution of the proportion of MECs with respect to class size and number of edges as in (Figure 4, Gillespie and Perlman, 2001).

We also compared performance in terms of speed and memory utilization.  
Our program runs in nearly the exact time measured by Gillespie and Perlman: we measured that our algorithm also takes around three minutes for eight vertices and only a few seconds or milliseconds for fewer vertices.  
We do, however, have better memory utilization than Gillespie and Perlman as the number of bits we store for hashes is dependent on the number of possible immoralities in the skeleton rather than on the number of possible triples of vertices.  
We also use Java to do our data processing.  
Thus, since we only need a print out of the data collected in subsection~\ref{subsec: the algorithm} (3) for each skeleton processed, the garbage collector clears out our hash map allocation after each skeleton.  This allows us to not only log the class size and the number of MECs per skeleton, but also the number of immoralities per class as well as the number of induced $3$-paths, the degree sequence, the number of edges, and the number of triangles.  
Thus, despite the fact that our algorithm only matches the Gillespie and Perlman algorithm in time, it is collecting significantly more data per skeleton.

\subsection{VALIDITY OF THEOREM~\ref{thm: frequency determinism}} 
\label{subsubsec: validity of theorems 2 and 3}
%
\begin{table}
\caption{The 10-node graphs with the same $S(G;x)$ function.  Here, $nK_p$ denotes $n$ disjoint copies of $K_p$.}
\label{table_collisions}
\begin{center}
\begin{tabular}{lll}
Class Size(s) & Skeleton 1 & Skeleton 2\\ \hline
24 & $K_4\sqcup 6K_1$ & $K_3\sqcup 2K_2\sqcup 3K_1$ \\
48 & $K_4\sqcup K_2\sqcup 4K_1$ & $K_3\sqcup 3K_2\sqcup K_1$ \\
144 & $K_4\sqcup K_3\sqcup 3K_1$ & $2K_3\sqcup 2K_2$ \\
720 & $K_6\sqcup 4K_1$ & $K_5\sqcup K_3\sqcup 2K_1$ \\
1440 & $K_6\sqcup K_2\sqcup 2K_1$ & $K_5\sqcup K_3\sqcup K_2$ \\
2880 & $K_6\sqcup 2K_2$ & $K_5\sqcup K_4\sqcup K_1$ \\
72, 24 & $K_4\sqcup I_3\sqcup 3K_1$ & $K_3\sqcup I_3\sqcup 2K_2$ \\
\end{tabular}
\end{center}
\end{table}
After running the algorithm on all connected graphs with up to ten nodes, we verified that there was no pair of skeleta with $p\leq 10$ nodes that have the same frequency vector $M(G)_{\mbox{freq}}$.  
This indicates that the MEC frequency vectors $M(G)_{\mbox{freq}}$ (or equivalently the arithmetic generating functions $S(G;x)$) bijectively map to skeletons of connected graphs up to ten nodes.  
Similarly, when we ran our algorithm on all graphs with ten nodes including graphs that were not necessarily connected, we found that the only collisions occurred on graphs $G$ and $H$ with the following property: Let $G=G_1\sqcup\cdots \sqcup G_m$  and $H=H_1\sqcup\cdots \sqcup H_n$ be the decompositions of $G$ and $H$ into connected components. Let $G\cap H$ denote the set consisting of the connected components that are shared between $G$ and $H$ up to isomorphism. Now let $G\setminus G\cap H = G_{i_1}\sqcup\cdots \sqcup G_{i_m}$ and $H\setminus G\cap H = H_{j_1}\sqcup\cdots \sqcup H_{j_n}$, where $i_1,\dots , i_m\in [m]$ and $j_1,\dots , j_n\in [n]$, be the remaining subgraphs.  Then 
$
\prod_{k=1}^m |G_{i_k}| = \prod_{\ell=1}^n |H_{j_{\ell}}|.
$
Over all graphs with ten nodes, there are seven such examples that occurred. These are shown in Table~\ref{table_collisions}.

\section{Discussion}
\label{sec: discussion}

Understanding the number and size of MECs is important since it tells us about the complexity and uncertainty of DAG model recovery.  
In this paper, we introduced a pair of generating functions, $M(G;x)$ and $S(G;x)$, that count the number of MECs on a skeleton $G$ by their number of immoralities and size, respectively.  
This constitutes a novel approach to the MEC enumeration problem that yields connections to classically studied problems in combinatorial optimization.  
In particular, we observed that computing the degree of $M(G;x)$ for triangle-free graphs relates to the vertex-cover problem for $G$ and its associated star decompositions.  
These connections allowed us to prove that computing the degree of $M(G;x)$ is NP-hard, thus demonstrating that counting the number of MECs on $G$ must be hard as well.  
Alternatively, we observed the complexity of enumerating MECs by size by showing that $S(G;x)$ is distinct for every connected graph $G$ on $p\leq 10$ nodes.  
The connections to classical problems revealed here suggest that the number of MECs for sparse graphs can be better understood by a closer examination of $M(G;x)$ and $S(G;x)$.  
In particular, it is natural to ask how $M(G;x)$ relates to the enumeration of vertex covers (or related graphical structures) for fixed families of sparse graphs.  
Future work is required to address questions of this nature.

\subsubsection*{Acknowledgements}
We wish to thank Brendan McKay for helpful advice in the use of the programs {\tt nauty} and {\tt Traces}.  
Liam Solus was supported by an NSF Mathematical Sciences Postdoctoral Research Fellowship (DMS - 1606407).  Caroline Uhler was partially supported by DARPA (W911NF-16-1-0551), NSF (1651995) and ONR (N00014-17-1-2147).

\subsubsection*{References}
S.~A.~Andersson, D.~Madigan, and M.~D.~Perlman. \emph{A characterization of Markov equivalence classes for acyclic digraphs}. Annals of Stat. 25.2 (1997): 505--541.

V.~C.~Barbosa and J.~L.~Szwarcfiter. \emph{Generating all the acyclic orientations of an undirected graph}. Information Processing Letters 72.1 (1999): 71--74.




J.~Brown and R.~Hoshino. \emph{Well-covered circulant graphs}. Discrete Mathematics 311.4 (2011): 244--251.

P.~Cain. \emph{Decomposition of complete graphs into stars}. Bulletin of the Australian Mathematical Society 10.01 (1974): 23--30.


D.~M.~Chickering. \emph{Learning equivalence classes of Bayesian-network structures}. Journal of Machine Learning Research 2 (2002): 445--498.

E.~Cohen and M.~Tarsi. \emph{NP-completeness of graph decomposition problems}. Journal of Complexity 7.2 (1991): 200--212.

J.~A.~Ellis-Monaghan and C.~Merino. \emph{Graph polynomials and their applications I: The Tutte polynomial}. Structural Analysis of Complex Networks. Birkh\"{a}user Boston, 2011. 219--255.

	
N.~Friedman, M.~Linial, I.~Nachman and D.~Peter. \emph{Using Bayesian networks to analyze expression data}. Journal of Computational Biology 7 (2000): 601--620.

M.~R.~Garey and D.~S.~Johnson. \emph{Computers and intractability: a guide to the theory of NP-completeness}. A Series of Books in the Mathematical Sciences. WH Freeman and Company, New York, NY 25.27 (1979): 141.

S.~B.~Gillispie. \emph{Formulas for counting acyclic digraph Markov equivalence classes.} Journal of Statistical Planning and Inference 136.4 (2006): 1410-1432.

S. B.~Gillispie and M. D.~Perlman. \emph{Enumerating Markov equivalence classes of acyclic digraph models.} Proc. of the Seventeenth Conference on Uncertainty in Artificial Intelligence. Morgan Kaufmann Publishers Inc., 2001.


Y.~He and B.~Yu. \emph{Formulas for counting the sizes of Markov equivalence classes of directed acyclic graphs.} Preprint available on the arXiv: \url{https://arxiv.org/pdf/1610.07921.pdf} (2016).  

R.~Hoshino. \emph{Independence polynomials of circulant graphs}. Library and Archives Canada, 2008.

R.~M.~Karp \emph{Reducibility among combinatorial problems}. Complexity of Computer Computations. Springer US (1972): 85--103.

V.~E.~Levit and E.~Mandrescu. \emph{The independence polynomial of a graph -- a survey}. Proceedings of the 1st International Conference on Algebraic Informatics. Vol. 233254. 2005.

C.~Lin, and T-W.~Shyu. \emph{A necessary and sufficient condition for the star decomposition of complete graphs}. Journal of Graph Theory 23.4 (1996): 361--364.

J.~H.~van Lint and R.~M.~Wilson. \emph{A Course in Combinatorics}. Cambridge University Press, 2001.

B.~D.~McKay and A.~Piperno. \emph{Practical graph isomorphism, II}. J. of Symbolic Comp. 60 (2014): 94--112.

C.~Meek. \emph{Causal inference and causal explanation with background knowledge}. Proc. of the Eleventh Conference on Uncertainty in Artificial Intelligence (1995): 403--410.

J.~Pearl. \emph{Causality: Models, Reasoning, and Inference}. Cambridge University Press, Cambridge, 2000.

S.~Poljak. \emph{A note on stable sets and colorings of graphs.} Commentationes Mathematicae Universitatis Carolinae 15.2 (1974): 307--309.

H.~Prodinger and R.~F.~Tichy. \emph{Fibonacci numbers of graphs}. Fibonacci Quarterly 20.1 (1982): 16--21.


J.~M.~Robins, M.~A.~Hern\'an and B.~Brumback. \emph{Marginal structural models and causal inference in epidemiology}. Epidemiology 11.5 (2000): 550--560.

N.~J.~Sloane. \emph{The On-Line Encyclopedia of Integer Sequences}. (2003).


P.~Spirtes, C.~N.~Glymour and R.~Scheines. \emph{Causation, Prediction, and Search}. MIT Press, Cambridge, 2001.

B.~Steinsky. \emph{Enumeration of labelled chain graphs and labelled essential directed acyclic graphs}. Discrete Mathematics 270.1 (2003): 267-278.



K.~Ushio, S.~Tazawa and S.~Yamamoto. \emph{On claw-decomposition of a complete multipartite graph}. Hiroshima Mathematical Journal 8.1 (1978): 207--210.

T.~Verma and J.~Pearl. \emph{An algorithm for deciding if a set of observed independencies has a causal explanation}. Proc. of the Eighth International Conference on Uncertainty in Artificial Intelligence. Morgan Kaufmann Publishers Inc., 1992.
 
S.~Wagner. \emph{Asymptotic enumeration of extensional acyclic digraphs}. Algorithmica 66.4 (2013): 829--847.

\end{document}